\DeclareMathOperator{\Path}{\mathsf{Path}}
\DeclareMathOperator{\Cycle}{\mathsf{Cycle}}
\newcommand{\Tadpole}{\mathsf{Tad}}
\newcommand{\Fruit}{\mathsf{Cycle}^{\perp}}
\DeclareMathOperator{\Star}{\mathsf{Star}}
\DeclareMathOperator{\Grid}{\mathsf{Grid}}
\DeclareMathOperator{\FS}{\mathsf{FS}}
\DeclareMathOperator{\Spider}{\mathsf{Spider}}
\DeclareMathOperator{\Spycle}{\mathsf{SC}}
\theoremstyle{plain}
\newtheorem{theorem}{Theorem}
\newtheorem{lemma}[theorem]{Lemma}
\newtheorem{corollary}[theorem]{Corollary}
\newtheorem{qn}[theorem]{Question}
\newtheorem{claim}[theorem]{Claim}
\theoremstyle{definition}
\theoremstyle{remark}
\newtheorem*{ex}{Example}
\numberwithin{equation}{section}
\numberwithin{theorem}{section}
\title{Connectedness in Friends-and-Strangers Graphs of Spiders and Complements}
\begin{document}

\author[Alan Lee]{Alan Lee}
\address[]{Henry M. Gunn High School, Palo Alto, CA 94306, USA}
\email{alandongjinlee@gmail.com}
\maketitle

\begin{abstract}
    Let $X$ and $Y$ be two graphs with vertex set $[n]$. Their friends-and-strangers graph $\FS(X,Y)$ is a graph with vertices corresponding to elements of the group $S_n$, and two permutations $\sigma$ and $\sigma'$ are adjacent if they are separated by a transposition $\{a,b\}$ such that $a$ and $b$ are adjacent in $X$ and $\sigma(a)$ and $\sigma(b)$ are adjacent in $Y$. Specific friends-and-strangers graphs such as $\FS(\Path_n,Y)$ and $\FS(\Cycle_n,Y)$ have been researched, and their connected components have been enumerated using various equivalence relations such as double-flip equivalence. A spider graph is a collection of path graphs that are all connected to a single center point. In this paper, we delve deeper into the question of when $\FS(X,Y)$ is connected when $X$ is a spider and $Y$ is the complement of a spider or a tadpole.
\end{abstract}



\section{Introduction}

Suppose that a set of $n$ people are sitting on $n$ chairs, playing a swapping game. Each pair of chairs can be adjacent or not, and a swap between two people sitting in adjacent chairs is only valid if the two people are friends (where friendship is symmetric). The set of all permutations of the $n$ people on the $n$ chairs produces a graph, where two permutations are adjacent if they can be obtained from each other by an allowable swap.

We acquaint ourselves with some basic graph theoretical terminology before formally introducing friends-and-strangers graphs. We write $V(G)$ and $E(G)$ for the vertex set and edge set, respectively, of a graph $G$. We define a \textit{subgraph} of a graph to be a graph whose vertex \textit{and} edge sets are subsets of the original graph. The \textit{complement} of a graph $G$, denoted $\overline{G}$, is the graph with vertex set $V(\overline{G}) = V(G)$ and edge set $E(\overline{G}) = \{\{a,b\}:\{a,b\}\not\in E(G)\}$. Additionally, a graph $G$ is \textit{connected} if there exists a sequence of adjacent vertices from $a$ to $b$ for all vertices $a,b\in G$.

We can 
formalize the swapping game notion as follows. Let $X$ and $Y$ be graphs with $V(X)=V(Y)=[n]=\{1,\ldots,n\}$. The \emph{friends-and-strangers graph} $\FS(X,Y)$ has a vertex set isomorphic to the elements of the permutation group $S_n$, and two vertices (permutations) $\sigma,\phi$ are connected if there exists an edge $\{a,b\}\in E(X)$ such that the following conditions are met:
\begin{itemize}
    \item $\{\sigma(a),\sigma(b)\}\in E(Y)$
    \item $\sigma(a)=\phi(b)$ and $\sigma(b)=\phi(a)$
    \item $\sigma(c)=\phi(c) \text{ for all }  c\in V(X)\backslash \{a,b\}.$
\end{itemize}

To avoid confusion when discussing the vertices of $X$ and $Y$, we will refer to the vertices of $Y$ as \emph{labels}.

We also define special graphs that will be frequented in the following sections. Note that all of the following graphs have vertex set $[n]$, where $n$ can be changed as necessary.

\begin{itemize}
    \item The \textit{star graph} $\Star_n$ is the graph with edge set $E(\Star_n)=\{\{1,b\}:b\in[n],1<b\}$.
    \item The \textit{path graph} $\Path_n$ is the graph with edge set $E(\Path_n)=\{\{k,k+1\}:k\in[n-1]\}$.
    \item The \textit{cycle graph} $\Cycle_n$ is the path graph with the additional edge $\{n,1\}$.
    \item The \textit{tadpole graph} $\Tadpole_{c,n-c}$ is a cycle graph with $c\geq 3$ edges joined together with a path graph with $n-c$ edges. Its vertex set is $[n]$ and its edge set is $\{\{k,k+1\}:k\in[n-1]\}\cup\{\{1,c\}\}.$ We also denote the sole vertex of degree $3$ as the \textit{triple point}. Additionally, letting $n=c$ results in $\Tadpole_{c,n-c}\cong \Cycle_{n}$.
\end{itemize}

\begin{ex}
Let $X=\Star_{4}=\begin{array}{l}
\begin{tikzpicture}[line width=1pt,x=0.4cm,y=0.3cm] \tikzstyle{dot}=[circle,thin,draw,fill=black,inner sep=1pt] 
\pgfdeclarelayer{nodelayer} \pgfdeclarelayer{edgelayer} \pgfsetlayers{edgelayer,nodelayer}
\begin{pgfonlayer}{nodelayer}
\node [style=dot] (0) at (0,0) {};
\node [style=dot] (1) at (2,0) {};
\node [style=dot] (2) at (-1,1) {};
\node [style=dot] (3) at (-1,-1) {};
\end{pgfonlayer}
\begin{pgfonlayer}{edgelayer}
		\draw (1) to (0);
		\draw (0) to (2);
		\draw (3) to (0);
	\end{pgfonlayer}\end{tikzpicture}\end{array}$ and
$Y=\Tadpole_{3,1}=\begin{array}{l}\begin{tikzpicture}[line width=1pt,x=0.4cm,y=0.3cm] \tikzstyle{dot}=[circle,thin,draw,fill=black,inner sep=1pt] 
\pgfdeclarelayer{nodelayer} \pgfdeclarelayer{edgelayer} \pgfsetlayers{edgelayer,nodelayer}
\begin{pgfonlayer}{nodelayer}
\node [style=dot] (0) at (0,0) {};
\node [style=dot] (1) at (2,0) {};
\node [style=dot] (2) at (-1.5,1) {};
\node [style=dot] (3) at (-1.5,-1) {};
\end{pgfonlayer}
\begin{pgfonlayer}{edgelayer}
		\draw (1) to (0);
		\draw (0) to (2);
		\draw (3) to (0);
		\draw (2) to (3);
	\end{pgfonlayer}\end{tikzpicture}\end{array}$. 
In Figure \ref{fig:examplefsgraph} the graph $\FS(X,Y)$ is displayed.
\end{ex}
\begin{figure}[H]
    \centering
    \begin{tikzpicture}[line width=1pt, x=0.55cm,y=0.3cm]
    \tikzstyle{dot}=[circle,thin,draw,fill=black,inner sep=1pt]
    \pgfdeclarelayer{nodelayer}
    \pgfdeclarelayer{edgelayer}
    \pgfsetlayers{edgelayer,nodelayer}
	\begin{pgfonlayer}{nodelayer}
		\node [style=dot] (0) at (-7, 2) {};
		\node [style=dot] (1) at (-7, 3) {};
		\node [style=dot] (2) at (-8, 1) {};
		\node [style=dot] (3) at (-6, 1) {};
		\node [style=dot] (4) at (-8, -1) {};
		\node [style=dot] (5) at (-6, -1) {};
		\node [style=dot] (6) at (-7, -2) {};
		\node [style=dot] (7) at (-7, -3) {};
		\node [style=dot] (8) at (-3, 2) {};
		\node [style=dot] (9) at (-3, 3) {};
		\node [style=dot] (10) at (-4, 1) {};
		\node [style=dot] (11) at (-2, 1) {};
		\node [style=dot] (12) at (-4, -1) {};
		\node [style=dot] (13) at (-2, -1) {};
		\node [style=dot] (14) at (-3, -2) {};
		\node [style=dot] (15) at (-3, -3) {};
		\node [style=dot] (16) at (1, 2) {};
		\node [style=dot] (17) at (1, 3) {};
		\node [style=dot] (18) at (0, 1) {};
		\node [style=dot] (19) at (2, 1) {};
		\node [style=dot] (20) at (0, -1) {};
		\node [style=dot] (21) at (2, -1) {};
		\node [style=dot] (22) at (1, -2) {};
		\node [style=dot] (23) at (1, -3) {};
	\end{pgfonlayer}
	\begin{pgfonlayer}{edgelayer}
		\draw (1) to (0);
		\draw (0) to (2);
		\draw (2) to (4);
		\draw (4) to (6);
		\draw (6) to (7);
		\draw (6) to (5);
		\draw (5) to (3);
		\draw (3) to (0);
		\draw (9) to (8);
		\draw (8) to (10);
		\draw (10) to (12);
		\draw (12) to (14);
		\draw (14) to (15);
		\draw (14) to (13);
		\draw (13) to (11);
		\draw (11) to (8);
		\draw (17) to (16);
		\draw (16) to (18);
		\draw (18) to (20);
		\draw (20) to (22);
		\draw (22) to (23);
		\draw (22) to (21);
		\draw (21) to (19);
		\draw (19) to (16);
	\end{pgfonlayer}
\end{tikzpicture}
    \label{fig:examplefsgraph}
    \caption{The graph $\FS(\Star_4,\Tadpole_{3,1})$ without vertex labels.}
\end{figure}

Friends-and-strangers graphs help explain how permutations are related. For example, the fact that the 15-puzzle is not solvable 
is due to the fact that $\FS(\Star_{16},\Grid_{4\times 4})$ is not connected, where $\Grid_{n\times n}$ is a square grid graph with $n^2$ vertices. This example was studied by Wilson in \cite{wilson}, where he studied the connectivity of friends-and-strangers graphs of the form $\FS(\Star_n,Y)$.

The term ``friends-and-strangers" was first coined by Defant and Kravitz in \cite{main}, where the connectivity of $\FS(\Cycle_n,Y)$ and $\FS(\Path_n,Y)$ was studied. Extremal aspects of friends-and-strangers graphs were studied by Alon, Defant, and Kravitz in \cite{typext} as well as Bangachev in \cite{bang}, where a minimal degree condition on $X$ and $Y$ was provided for the connectedness of $\FS(X,Y).$ Jeong also determined when $\FS(X,Y)$ is connected for biconnected graphs $X$ \cite{jeongstruct}. Additionally, Wang and Chen studied the connectivity of $\FS(X,Y)$ for randomly selected pairs of graphs $X\in\mathcal{G}(n,p_1)$ and $Y\in\mathcal{G}(n,p_2)$ \cite{randpairs}. Although the connectedness of friends-and-strangers graphs has been the primary question for related research, Jeong's recent paper shows that diameters of friends-and-strangers graphs are not polynomially bounded \cite{jeongdiam}. 

In this paper we expand upon the results shown in \cite{group}, specifically those relating to connectedness of $\FS(\Spider,Y)$. After introducing some preliminary results in \ref{background}, we expand upon them in Section \ref{fruit} to consider the connectedness of the graph $\FS(\Spider,\overline{\Tadpole_{c,n-c}})$. We then provide an inductive approach to show how every connected graph $X$ 
that contains $\Spider(2,2,1,1)$ produces a connected friends-and-strangers graph $\FS(X,\overline{\Tadpole}_{c,n-c})$. 

In Section \ref{general}, we focus on finding graphs $X$ such that whenever $\overline{Y}$ is a spider with at most $n$ legs, $\FS(X,Y)$ is connected. We prove necessary and sufficient conditions guaranteeing the connectedness of $\FS(X,Y)$. These conditions concern whether or not $X$ contains a specific graph.

In Section \ref{final}, we propose some more questions regarding the connectedness of friends-and-stranger graphs $\FS(X,Y)$ where $X$ is a modified type of spider called the \emph{spycle}, as well as if $Y$ is the complement of a spycle graph.

\section{Background} \label{background}

Let 
$\lambda_1\geq\lambda_2\geq\cdots\geq\lambda_k$ be integers. The \emph{spider graph} $\Spider(\lambda_1,\lambda_2,\ldots,\lambda_{k})$ is a graph on $n=1+\sum_{i=1}^k \lambda_i$ vertices. 
Each vertex of the form $1,1+\lambda_1,1+\lambda_1+\lambda_2,\ldots$ is adjacent to the vertex $n$. For all vertices $m\neq n$, the vertex $m$ is adjacent to vertex $m+1$ if and only if the vertex $m+1$ is not adjacent to vertex $n$. Each of the paths that have been adjoined to form the spider graph are referred to as \emph{legs}, and the vertex of degree $k$ is referred to as the \emph{center}. Additionally, the vertices of degree $1$ are referred to as \emph{feet}. A picture of $\Spider(5,3,1)$ is below.
\begin{figure}[H]
\begin{tikzpicture}[line width=1pt, x=0.18cm,y=0.18cm]
\tikzstyle{dot}=[circle,thin,draw,fill=black,inner sep=1.5pt]
\pgfdeclarelayer{nodelayer}
\pgfdeclarelayer{edgelayer}
\pgfsetlayers{edgelayer,nodelayer}
	\begin{pgfonlayer}{nodelayer}
		\node [style=dot] (0) at (2.75, 6.25) {};
		\node [style=dot] (1) at (-0.5, 8.25) {};
		\node [style=dot] (2) at (5.5, 8.25) {};
		\node [style=dot] (3) at (8.75, 6.5) {};
		\node [style=dot] (4) at (11.5, 9) {};
		\node [style=dot] (5) at (2, 3.25) {};
		\node [style=dot] (6) at (4.5, 1) {};
		\node [style=dot] (7) at (2.5, -1) {};
		\node [style=dot] (8) at (4.25, -3) {};
		\node [style=dot] (9) at (2, -5) {};
	\end{pgfonlayer}
	\begin{pgfonlayer}{edgelayer}
		\draw (1) to (0);
		\draw (0) to (2);
		\draw (2) to (3);
		\draw (3) to (4);
		\draw (0) to (5);
		\draw (5) to (6);
		\draw (6) to (7);
		\draw (7) to (8);
		\draw (8) to (9);
	\end{pgfonlayer}
\end{tikzpicture}
\end{figure}

The idea of a \textit{fruit graph} $\Fruit_n$ was also introduced in \cite{group}, where $\Fruit_n=\Tadpole_{n-1,1}$. The following results were shown.

\begin{theorem}[\cite{group}]\label{thm:group}
Let $\lambda_1\geq\cdots\geq\lambda_k$ be positive integers such that $\lambda_1+\cdots+\lambda_k+1=n\geq 4$. The friends-and-strangers graph $\FS(\Spider(\lambda_1,\ldots,\lambda_k),\overline{\Cycle_{n}})$ is connected if and only if $(\lambda_1,\ldots,\lambda_k)$ is not of the form $(\lambda_1,1,1)$ and is not in the following list:
    \[ (1,1,1,1),\quad (2,2,1),\quad  (2,2,2),\quad  (3,2,1),\quad  (3,3,1),\quad  (4,2,1),\quad  (5,2,1).\]
\end{theorem}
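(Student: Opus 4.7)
The plan is to prove the two directions of the biconditional separately. For the forward direction---that every excluded case yields a disconnected graph---I would exhibit nontrivial invariants preserved under all admissible swaps. For the converse, I would proceed by induction on the total number of vertices $n = 1 + \sum_i \lambda_i$, reducing non-exceptional spiders to strictly smaller non-exceptional ones.

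For the forward direction, the seven small exceptional cases---all on at most nine vertices---can be verified either computationally or via explicit invariants. For instance, $\Spider(1,1,1,1) = \Star_5$ falls to a Wilson-style sign-parity argument, since $\overline{\Cycle_5} \cong \Cycle_5$ and every admissible swap occurs at the center, generating an alternating-type action on the labels at the four leaves. The more substantive task is the infinite family $\Spider(\lambda_1,1,1)$, for which I would construct a cyclic-order invariant as follows. Let $f_1, f_2$ be the two length-one legs (feet) and let $v_1$ be the first vertex of the long leg. The two feet can exchange labels only with the center, and the center can exchange only with $f_1$, $f_2$, or $v_1$; moreover each exchange requires a non-edge of $\Cycle_n$. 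One then verifies that the cyclic order in $[n]$ of the triple $(\sigma(f_1), \sigma(f_2), \sigma(v_1))$, regarded as a cyclic permutation of three elements sitting within the cycle $\Cycle_n$, is preserved across every admissible move. Since this cyclic datum takes multiple values across $S_n$, the friends-and-strangers graph is forced to be disconnected.

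For the converse, I would induct on $n$, with small base cases handled directly. In the inductive step, given a non-exceptional spider $\Spider(\lambda_1,\ldots,\lambda_k)$ and a permutation $\sigma$, I would first route $\sigma$ to a permutation fixing a prescribed label (say $n$) at a designated foot, exploiting the density of $\overline{\Cycle_n}$ (only $n$ non-edges out of $\binom{n}{2}$) together with the maneuvering room provided by at least one leg of length $\geq 2$ outside the $(\lambda_1,1,1)$ regime. Once the foot is fixed, the remaining configuration embeds into $\FS(\Spider',\overline{\Cycle_{n-1}})$, where $\Spider'$ is the spider obtained by deleting that foot; the inductive hypothesis then finishes the proof provided $\Spider'$ is itself non-exceptional. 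The main obstacle is controlling this inductive boundary: shortening a leg can push a non-exceptional spider into the excluded list---for example $(6,2,1)$ reduces to $(5,2,1)$, and reductions from $(\lambda_1,2,1)$ or $(3,3,2)$ must similarly be vetted. To circumvent this, one must argue that an alternative leg can always be shortened instead when $k \geq 3$ and $\lambda_2 \geq 2$, or treat the finitely many offending boundary cases as auxiliary base cases. Establishing that such a flexible reduction always exists outside the excluded list, while simultaneously ruling out a spurious invariant whenever the spider has more structure than $(\lambda_1,1,1)$, is the most delicate part of the argument.
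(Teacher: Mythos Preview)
The paper does not prove this theorem at all: it is quoted verbatim from \cite{group} as a background result and is used only as input to later arguments (Corollary~\ref{prop3.5}, the base cases of Theorem~\ref{thm:stemmedcycle}, and the proofs of Claims~\ref{claim1}--\ref{claim3}). There is therefore no in-paper proof against which to compare your proposal.

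On the proposal itself, the overall architecture (invariants for the excluded cases, induction on $n$ for the rest) is the natural one, but your specific invariant for the family $(\lambda_1,1,1)$ does not survive scrutiny. You track the cyclic order of $(\sigma(f_1),\sigma(f_2),\sigma(v_1))$, arguing that the center only exchanges with $f_1,f_2,v_1$. But $v_1$ also exchanges with $v_2$ along the long leg, and that swap replaces $\sigma(v_1)$ by the old $\sigma(v_2)$; the $\overline{\Cycle_n}$-constraint only says $\sigma(v_1)$ and $\sigma(v_2)$ are not cyclically adjacent, which places no restriction on where $\sigma(v_2)$ sits relative to $\sigma(f_1)$ and $\sigma(f_2)$. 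So the triple's cyclic order can flip. A working invariant for this family has to account for the entire long leg, not just $v_1$; typically one needs an orientation- or winding-type datum that is stable under all leg swaps, in the spirit of the acyclic-orientation machinery used for $\FS(\Path_n,Y)$ and $\FS(\Cycle_n,Y)$ in \cite{main}. Your inductive direction is more plausible as outlined, though, as you note, the boundary cases where trimming a foot lands on an excluded spider (e.g.\ $(6,2,1)\to(5,2,1)$, $(3,3,2)\to(3,3,1)$, $(2,2,1,1)\to(2,2,1)$) genuinely require separate treatment and are where most of the work in \cite{group} lies.
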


In particular, we use the following consequence of Theorem \ref{thm:group} to extend results related to connectedness.

\begin{corollary}\label{prop3.5}
Let $X$ be a connected graph on $n\geq 6$ vertices with at least one vertex of degree $4$ or more. Then $\FS(X,\overline{\Cycle_{n}})$ is connected.
\end{corollary}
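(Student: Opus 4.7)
The plan is to invoke the monotonicity of friends-and-strangers graphs in the first argument together with Theorem~\ref{thm:group}: since $\FS(X', \overline{\Cycle_n})$ is a spanning subgraph of $\FS(X, \overline{\Cycle_n})$ whenever $X' \subseteq X$ is a spanning subgraph, it suffices to produce a spanning spider in $X$ whose leg sizes avoid the exceptional list of Theorem~\ref{thm:group}.

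Inspecting that list, every forbidden tuple has exactly $k = 3$ legs except $(1,1,1,1)$, which forces $n = 5$. Hence, under the hypothesis $n \geq 6$, any spanning spider with $k \geq 4$ legs is automatically admissible. Consequently, the corollary reduces to the structural claim: \emph{every connected graph $X$ on $n \geq 6$ vertices with a vertex of degree $\geq 4$ admits a spanning subgraph isomorphic to $\Spider(\lambda_1, \ldots, \lambda_k)$ with $k \geq 4$}.

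To establish the structural claim, let $v \in V(X)$ have $\deg_X(v) = d \geq 4$ and neighbors $u_1, \ldots, u_d$. I would build a spanning tree $T$ of $X$ containing all edges $\{v, u_i\}$, so that $\deg_T(v) = d$ and $V(X) \setminus \{v\}$ is partitioned among the subtrees $T_1, \ldots, T_d$ of $T - v$, each rooted at $u_i$. The tree $T$ is a spider at $v$ precisely when every $T_i$ is a path, which I would arrange by a prioritized BFS from $v$ that always extends a current leg rather than starting a new branch at an interior vertex.

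The main obstacle is showing that this construction can always be completed: a priori some $T_i$ may need to branch. The natural formalization is an exchange argument — among all spanning trees $T$ of $X$ with $\deg_T(v) \geq 4$, minimize the number of non-center vertices of degree $\geq 3$ in $T$, then use edge swaps along fundamental cycles of $X$ with respect to $T$ to strictly decrease this count whenever it is positive, forcing the minimum to be $0$. Trees $X$ with few cycles will be the tightest case and may require a separate direct treatment. Once the structural lemma is secured, monotonicity combined with Theorem~\ref{thm:group} immediately closes the proof.
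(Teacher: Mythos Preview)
Your reduction is natural, but the structural claim is false. Take $X$ to be the tree on $8$ vertices with a vertex $v$ adjacent to $u_1,u_2,u_3,u_4$, and $u_1$ further adjacent to three leaves $w_1,w_2,w_3$. Then $\deg_X(v)=4$ and $n=8\geq 6$, so the hypotheses hold; but a spanning spider is in particular a spanning tree, and the only spanning tree of a tree is itself. Since $X$ has two vertices of degree $\geq 3$ (namely $v$ and $u_1$), it is not a spider, and therefore admits no spanning spider whatsoever---with any number of legs. Your proposed exchange argument cannot repair this: $X$ is acyclic, so there are no non-tree edges and hence no fundamental cycles to swap along. The case you flag as merely ``tightest'' is in fact a genuine obstruction, not a boundary case.

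The paper does not supply its own proof; the corollary is imported from~\cite{group}. The route there does not attempt to embed a single admissible spider in $X$. Rather, Theorem~\ref{thm:group} serves as the seed of an inductive argument of the same flavour as Lemma~\ref{lemma5.3} and Theorem~\ref{thm:spiderinduct} in this paper: one removes a leaf of $X$ together with the label sitting on it, uses that deleting any vertex from $\Cycle_n$ leaves a subgraph of $\Cycle_{n-1}$ so that the inductive hypothesis applies to the remaining $n-1$ positions, and then cycles every label through the deleted position. This completely sidesteps the question of whether $X$ contains a spanning spider.
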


In \cite{group} the following results regarding the connectedness of $\FS(\Spider,\overline{\Fruit})$ were shown. In this paper, we expand upon this result by considering if the graph $Y$ is the complement of a tadpole graph, which yields a general result that encompasses when $Y$ is the complement of a $3$-legged spider.

\begin{theorem}[\cite{group}]
Let $\lambda_1\geq\cdots\geq\lambda_k$ be positive integers such that $k\geq 3$ and $\lambda_1+\cdots+\lambda_k+1=n$. Then $\FS(\Spider(\lambda_1,\ldots,\lambda_k),\overline{\Fruit_n})$ is disconnected if and only if  $(\lambda_1,\ldots,\lambda_k)$ is of one of the following forms: \[(\lambda_1,1,1,1),\quad(\lambda_1,\lambda_2,1),\quad (2,2,2).\]
\end{theorem}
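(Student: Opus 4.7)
The plan is to establish both directions of the if-and-only-if. For the forward direction (disconnectedness in each of the three exceptional families), I would construct, for each family, an invariant of label permutations that is preserved by every allowed swap yet takes more than one value. The structural fact to exploit is that $\overline{\Fruit_n}$ is very dense: its only non-edges are those of the underlying tadpole, so the only ``stuck'' labels are those near the triple point, with the label $n-1$ having just $n-4$ neighbors in $\overline{\Fruit_n}$. For the family $(\lambda_1, 1, 1, 1)$, I would track a cyclic-order invariant of the labels on the three single-edge feet relative to the label at the center, and argue that no sequence of legal swaps (including ones that shuttle labels along the long leg) can cyclically permute the three foot labels, because doing so would force a low-degree label to make an illegal swap. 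For the family $(\lambda_1, \lambda_2, 1)$, the singleton foot is a chokepoint accessible only through the center, and by tracking when the low-degree label $n-1$ may pass through this chokepoint I would extract a parity invariant. For the specific case $(2,2,2)$, the small vertex count ($n=7$) and the threefold symmetry across equal legs permit a tailored invariant in the same spirit, or a direct case-check.

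For the reverse direction (connectedness for spiders outside the list), I would argue by induction on $n$. The base cases are the smallest non-exceptional spiders: $\Spider(3,2,2)$ for $k=3$ (since $\Spider(\lambda_1,\lambda_2,1)$ and $\Spider(2,2,2)$ are excluded), and $\Spider(2,2,1,1)$ together with $\Spider(2,2,1,\ldots,1)$ for $k\geq 4$ (since $(\lambda_1,1,1,1)$ is excluded, forcing $\lambda_2 \geq 2$). For each base case I would directly exhibit swap sequences realizing transpositions of adjacent labels along each leg and swaps around the center; this is feasible because $\overline{\Fruit_n}$ is nearly complete, so most swaps through the center are legal, and the only obstructions come from labels incident to the triple point. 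For the inductive step, I would grow the spider by lengthening an existing leg or appending a new leg, taking care that the enlarged tuple does not lie in the exceptional list, and show that the newly added vertex can be absorbed into any permutation realizable on the smaller spider via a short sequence of legal swaps, thereby reducing to the inductive hypothesis.

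The main obstacle is the $(\lambda_1, 1, 1, 1)$ family, since the arbitrarily long leg prevents any purely local invariant at the center from working: labels can be shuttled far down the long leg before returning to be swapped with a foot. I expect the correct invariant to be a subtle parity attached to the joint configuration of the three foot labels together with the positions of the low-degree labels $\{1, n-2, n-1, n\}$, and verifying its invariance under every type of swap (center-foot, center-leg, and leg-leg) will be the most delicate part of the argument.
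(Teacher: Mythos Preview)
The paper does not prove this theorem at all: it is quoted verbatim as a background result from \cite{group} and no argument is given in the present paper. There is therefore nothing here to compare your proposal against.

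As to the proposal itself, it is a strategic outline rather than a proof. In the disconnectedness direction you do not actually construct any of the promised invariants; you only describe what shape they might take (``a cyclic-order invariant'', ``a parity invariant'', ``a subtle parity attached to the joint configuration''), and you explicitly flag the $(\lambda_1,1,1,1)$ family as an unresolved obstacle. Note also that the generic cut-vertex criterion (Theorem~\ref{thm:disconnected} in this paper) does not by itself cover all of $(\lambda_1,\lambda_2,1)$: the minimum degree of $\overline{\Fruit_n}$ is $n-4$, and for a three-legged spider with $\lambda_3=1$ the longest available cut-vertex path has length $\lambda_1$, so the criterion only bites when $\lambda_2\le 2$. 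Something further is genuinely needed for $\lambda_2\ge 3$, and your sketch does not supply it. In the connectedness direction, the inductive scheme is plausible in spirit and resembles the reductions used elsewhere in this paper, but the base cases (e.g.\ $\Spider(3,2,2)$ against $\overline{\Fruit_8}$) are asserted rather than verified, and the ``absorb the new vertex'' step is not carried out.

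If you want to see an actual proof, you would need to consult \cite{group}; the present paper only uses this theorem as input.
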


\begin{corollary}
Let $a$ and $b$ be positive integers with $a\geq b$. Then $\FS(\Spider(a,b,1,1),\overline{\Fruit_{n}})$ is connected if and only if $b\geq 2$.
\end{corollary}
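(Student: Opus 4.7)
The plan is to deduce this corollary directly from the preceding theorem by checking which of the excluded partition shapes can coincide with $(a,b,1,1)$. Since $\Spider(a,b,1,1)$ has $k=4$ legs and leg-lengths already weakly decreasing (because $a\geq b\geq 1$), the hypothesis $k\geq 3$ of the theorem is satisfied with $n=a+b+3$. It therefore suffices to determine when $(a,b,1,1)$ falls into one of the three excluded families $(\lambda_1,1,1,1)$, $(\lambda_1,\lambda_2,1)$, or $(2,2,2)$.

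First I would observe that $(a,b,1,1)$ is a partition with exactly four parts, so it cannot equal either of the three-part shapes $(\lambda_1,\lambda_2,1)$ or $(2,2,2)$. The only way to match an excluded form is therefore $(a,b,1,1) = (\lambda_1,1,1,1)$, and matching the second coordinate forces $b=1$. Hence $\FS(\Spider(a,b,1,1),\overline{\Fruit_n})$ is disconnected precisely when $b=1$, and connected precisely when $b\geq 2$.

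The hard part: there really is none, as the preceding theorem has absorbed all of the combinatorial difficulty; what remains is a short comparison of partition lengths and a single coordinate check. The only place one might slip is in forgetting to verify that the tuple $(a,b,1,1)$ is in the weakly decreasing order required by the theorem, which is immediate from the hypothesis $a\geq b\geq 1$.
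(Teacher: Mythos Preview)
Your argument is correct and is exactly the intended derivation: the paper states this corollary immediately after the cited theorem without a separate proof, and your partition-length check that $(a,b,1,1)$ matches an excluded form precisely when $b=1$ is the implicit reasoning.
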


\section{Fruits to Tadpoles}\label{fruit}

Notice that the fruit graph $\Fruit_{n}$ from before is simply $\Tadpole_{1,n-1}$. In this section, we expand upon the results in \cite{group}, generalizing the fruit graphs that consist of a cycle together with an edge connecting a new vertex with an existing one into tadpole graphs.

\begin{theorem}\label{thm:stemmedcycle}
Let $X=\Spider(a,b,1,1)$ be a graph on $n=a+b+3$ vertices with $a\geq b\geq 2$. Then $\FS(X,\overline{\Tadpole_{c,n-c}})$ is connected for any $c\geq 3$.
\end{theorem}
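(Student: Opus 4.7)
The plan is to proceed by induction on the tail length $t := n - c$. The base cases $t = 0$ and $t = 1$ correspond to $\overline{\Cycle_n}$ and $\overline{\Fruit_n}$, respectively, and are already covered by Section \ref{background}: Corollary \ref{prop3.5} handles the cycle case because the center of $X = \Spider(a,b,1,1)$ has degree $4$ and $n = a + b + 3 \geq 7$, while the earlier spider-fruit corollary handles the fruit case under the hypothesis $b \geq 2$.

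For the inductive step ($t \geq 2$), I compare $\Tadpole_{c+1, n-c-1}$ with $\Tadpole_{c, n-c}$. Both graphs contain the Hamiltonian path on vertices $1, 2, \ldots, n$; they differ only in the chord, which is $\{1, c+1\}$ in the former and $\{1, c\}$ in the latter. In the complements, this translates to: swaps exchanging labels $1$ and $c$ on an edge of $X$ are allowed in $\FS(X, \overline{\Tadpole_{c+1, n-c-1}})$ but not in $\FS(X, \overline{\Tadpole_{c, n-c}})$, whereas swaps exchanging labels $1$ and $c+1$ are allowed in the latter and not the former. The crux is then a key lemma: in $\FS(X, \overline{\Tadpole_{c, n-c}})$, for every permutation $\sigma \in S_n$ and every edge $\{v, w\} \in E(X)$ with $\{\sigma(v), \sigma(w)\} = \{1, c\}$, one can reach the permutation with these two labels exchanged via a sequence of valid moves. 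Given this lemma, any path in $\FS(X, \overline{\Tadpole_{c+1, n-c-1}})$ can be lifted swap-by-swap to a path in $\FS(X, \overline{\Tadpole_{c, n-c}})$, completing the induction.

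To prove the key lemma I will exploit the density of $\overline{\Tadpole_{c, n-c}}$, which forbids only $n$ label pairs out of $\binom{n}{2}$, together with the structure of $X$. The idea is to temporarily relocate label $c$ to a nearby ``parking'' vertex via a swap with some label $\ell \notin \{c-1, c+1\}$, shuttle label $1$ around the center through a different leg (avoiding all forbidden pairs), then undo the parking step to complete the effective $(1, c)$-exchange. The main obstacle will be a careful case analysis depending on where $v$ and $w$ sit in the spider and on the positions of the potentially obstructing labels $2, c-1, c+1, n$. The degree-$4$ center and two pendant legs of $X$ consistently furnish an available parking vertex and an alternative shuttle route, which is precisely where the full hypothesis $X = \Spider(a,b,1,1)$ with $a \geq b \geq 2$ enters the argument.
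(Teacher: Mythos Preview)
Your route is genuinely different from the paper's. The paper inducts on the total size $n$: it verifies by computer that $\FS(\Spider(2,2,1,1),\overline{\Tadpole_{c,7-c}})$ is connected for all $3\le c\le 7$, and then Lemma~\ref{lemma5.3} grows both $X$ and $\overline{Y}$ by one vertex at a time, using that deleting any label from $\overline{\Tadpole_{c,n-c}}$ leaves a graph whose complement is still contained in some $\Tadpole_{c-1,n-c}$ or $\Tadpole_{c,n-c-1}$. You instead fix $n$ and $X$ and induct on the tail length $t=n-c$, with the cycle and fruit cases as base, simulating each forbidden $(1,c)$-swap inside $\FS(X,\overline{\Tadpole_{c,n-c}})$. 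If your key lemma holds, your argument is cleaner in that it avoids computer verification entirely and only needs the already-proved cycle and fruit results.

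The gap is in the key lemma. As written, ``park $c$, shuttle $1$, undo the parking'' cannot realise the \emph{exact} transposition $\sigma\mapsto\sigma\circ(v\,w)$ in a tree: after parking $c$ at a neighbour $p$ of $w$ and sliding $1$ onto $w$, undoing the parking would require swapping $c$ with the label now at $w$, which is $1$ --- precisely the forbidden pair. Because $X$ is a tree there is no alternate route from $p$ to $v$, so any sequence that truly exchanges $1$ and $c$ while restoring every other label must use at least two auxiliary neighbours of some branching vertex (a seven-move conjugation through two helpers $p,q$ works when $w$ is the center and the labels $\ell_p,\ell_q$ both avoid $\{2,c-1,c+1\}$ and are non-consecutive). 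You have not handled the cases where (i) the edge $\{v,w\}$ lies entirely inside a long leg, so neither endpoint has the required extra neighbours, or (ii) the labels adjacent to the center are exactly the obstructing set $\{2,c-1,c+1\}$. These cases force you to first rearrange faraway labels before the local exchange is even possible, and doing so while keeping the final configuration equal to $\sigma\circ(v\,w)$ is a nontrivial bookkeeping problem that your sketch does not address. Until that case analysis is actually carried out, the inductive step is not established.
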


The proof of this theorem relies heavily on induction and can be split into multiple base cases as well as inductive steps.

\begin{itemize}
\item \textit{Base Case:} All graphs of the form $\FS(\Spider(2,2,1,1),\overline{\Tadpole_{c,7-c}})$ with $3\leq c\leq 7$ are connected. This can be verified using a computer program. Past results in \cite{group} also prove connectedness for $c=6,7$. We will also need that $\FS(\Spider(a,b,1,1),\overline{\Tadpole_{c,n-c}})$ is connected for $c=0,n$ for all $n\geq7$. However, this just reduces to the cases of $\FS(\Spider(a,b,1,1),\overline{\Cycle_n})$ and $\FS(\Spider(a,b,1,1),\overline{\Path_n})$, both of which are connected by Theorem \ref{thm:group}.

\item \textit{Inductive Step:} A lemma similar to \cite[Corollary~4.3]{group}, shown below.
\end{itemize}

\begin{lemma}\label{lemma5.3}
    Let $\FS(\Spider(a,b,1,1),\overline{\Tadpole_{c-1,n-c}})$ be connected with $a,b\geq 2$ and $n>c$, and let $\FS(\Spider(a,b,1,1),\overline{\Tadpole_{c,n-c-1}})$ also be connected. Let $X$ be a graph obtained by adding an additional vertex to $\Spider(a,b,1,1)$ and a single edge that connects the new vertex in $X$ to one of the existing ones. Then $\FS(X,\Tadpole_{c,n-c})$ is also connected.
\end{lemma}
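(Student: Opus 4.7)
Let $v$ denote the newly added vertex in $X$, and let $u$ be its unique neighbor in $X$; so $v$ is a leaf of $X$, and $X \setminus \{v\} = \Spider(a,b,1,1)$. The plan follows the model of \cite[Corollary~4.3]{group}. I will (i) show that every $\sigma \in S_n$ is connected in $\FS(X, \overline{\Tadpole_{c,n-c}})$ to some $\sigma^*$ with $\sigma^*(v) = n$, and (ii) argue that any two such $\sigma^*$ are themselves connected.

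For part (ii), two permutations with label $n$ fixed at $v$ can be connected via a sequence of swaps occurring entirely on edges of $\Spider(a,b,1,1)$, so that label $n$ stays at $v$ throughout. Each such swap permutes labels in $[n-1]$ with label-adjacency given by $\overline{\Tadpole_{c,n-c}}\big|_{[n-1]} = \overline{\Tadpole_{c,n-c-1}}$, because deleting label $n$ from $\Tadpole_{c,n-c}$ simply removes the pendant edge $\{n-1,n\}$ and leaves $\Tadpole_{c,n-c-1}$. The second hypothesis supplies exactly the required connectivity, so (ii) is immediate and the whole proof reduces to (i).

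For (i), the key observation is that in $\Tadpole_{c,n-c}$ with $n > c$, label $n$ is a pendant vertex whose unique neighbor is $n-1$, so in $\overline{\Tadpole_{c,n-c}}$ label $n$ is adjacent to every label except $n-1$ and can therefore be swapped along any edge of $X$ with essentially anything it meets. The strategy is to walk label $n$ along a path in $\Spider(a,b,1,1)$ to $u$ and then swap it across $\{u,v\}$. Each step succeeds unless label $n-1$ sits in the next slot, in which case $n-1$ must first be moved aside; since $n-1$'s only neighbors in $\Tadpole_{c,n-c}$ are $n$ and $n-2$, label $n-1$ is itself highly mobile in the complement. The first hypothesis -- connectedness of $\FS(\Spider(a,b,1,1),\overline{\Tadpole_{c-1,n-c}})$ -- is invoked during these preliminary rearrangements to shuffle labels on the spider while the label currently at $v$ is temporarily held fixed; in each such application, the connectivity of the needed rearrangement follows from the hypothesis via an appropriate identification of the restricted label graph with (a supergraph of) $\overline{\Tadpole_{c-1,n-c}}$.

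The main obstacle will be the final swap across $\{u,v\}$ in the case where label $n-1$ sits at $v$ exactly when label $n$ arrives at $u$: displacing $n-1$ from $v$ requires bringing some label outside $\{n,n-2\}$ to $u$ without losing the progress on $n$, which is delicate. Handling this, together with the case analysis on the position of $u$ within $\Spider(a,b,1,1)$ (center, foot or interior of a long leg, or a length-$1$ foot), is where both hypotheses earn their keep: the first provides the flexibility to pre-arrange the spider's labels when the restricted label graph does not immediately coincide with a smaller tadpole complement, and the second closes the argument once label $n$ is parked at $v$.
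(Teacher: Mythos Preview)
Your architecture is the same as the paper's---freeze the label sitting at the new leaf $v$, show that the remaining $n-1$ labels can be freely permuted on $\Spider(a,b,1,1)$ by appealing to the two inductive hypotheses, and then cycle labels through $v$---but you over-complicate it by singling out the pendant label $n$ and trying to walk it to $v$ step by step. The paper simply proves the free-permutation claim for \emph{every} choice of frozen label $l$ at once, after which there is nothing special about $n$: one can place any friend of the current $l$ at $u$, swap it into $v$, and iterate.

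Where your proposal has a real gap is the division of labor between the two hypotheses. You assert that in part~(i) the restricted label graph $\overline{\Tadpole_{c,n-c}}\setminus\{l\}$ is always a supergraph of $\overline{\Tadpole_{c-1,n-c}}$, so that the first hypothesis alone handles all the preliminary rearranging. This is false whenever $l$ lies on the tail of the tadpole. For instance, deleting $l=c+1$ from $\Tadpole_{c,n-c}$ leaves $\Cycle_c\sqcup\Path_{n-c-1}$, which contains a $c$-cycle and therefore cannot embed in the unicyclic graph $\Tadpole_{c-1,n-c}$ (whose unique cycle has length $c-1$); it \emph{does} embed in $\Tadpole_{c,n-c-1}$, so the second hypothesis is the one that applies. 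The paper carries out exactly this case split on the position of $l$---on the cycle at distance $\geq 2$ from the triple point, adjacent to the triple point on the cycle, on the tail, or equal to the triple point---matching each case to one of the two hypotheses (or to the $\overline{\Cycle_{n-1}}$ result of Theorem~\ref{thm:group}). Your sketch skips this analysis, and the incorrect claim above shows it cannot be skipped. A smaller slip in the same vein: your statement that the only $\Tadpole$-neighbors of $n-1$ are $n$ and $n-2$ fails when $n=c+1$, since then $n-1=c$ is the triple point and has the additional neighbor $1$.
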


\begin{proof}

Consider $\FS(X,\overline{\Tadpole_{c,n-c}})$. If we were to remove a label $l$ in $\overline{\Tadpole_{c,n-c}}$ (namely the one occupying the new vertex of $X$), the graph $\overline{\Tadpole_{c,n-c}}$ would be reduced to a graph on $n-1$ vertices isomorphic one of the following:
\begin{itemize}
    \item $\overline{\Spider(n-c,i-1,c-i-1)}$ for $1\leq i \leq\lfloor\frac{c-1}{2}\rfloor$, if the removed vertex is $i\geq 2$ away from the triple point and on the cycle
    \item $\overline{\Path_{n-1}}$ if the removed vertex is $1$ away from the triple point and on the cycle
    \item $\overline{\Path_{n-c-i}\cup \Tadpole_{c,i-1}}$ if the removed vertex is a distance $i\geq 1$ from the triple point and on the path
    \item $\overline{\Path_{c-1}\cup \Path_{n-c}}$, if the removed vertex \textit{is} the triple point.
\end{itemize} 

Considering each one at a time, we see that all of their complements actually contain either $\Tadpole_{c,n-c-1}$ or $\Tadpole_{c-1,n-c}$.

\begin{itemize}
    \item Notice that $\Tadpole_{c-1,n-c}$ contains all spiders of the form $\Spider(n-c,i-1,c-i-1)$ with $1\leq i\leq \lfloor\frac{c-1}{2}\rfloor$ as subgraphs 
    (the former can be attained by adding an edge between the two endpoints of the latter two legs). Thus, given that $\FS(\Spider(a,b,1,1),\overline{\Tadpole_{c-1,n-c}})$ is connected, we can also conclude that $\FS(\Spider(a,b,1,1),\overline{\Spider(n-c,i-1,c-i-1)})$ is connected for the values of $i$ concerned.
    \item Additionally, we know that $\Path_{n-1}$ is a subgraph of $\Cycle_{n-1}$, and it is known from Theorem \ref{thm:group} that $\FS(\Spider(a,b,1,1),\overline{\Cycle_{n-1}})$ is connected.
    \item Furthermore, $\Path_{n-c-i}\cup \Tadpole_{c,i-1}$ is a subgraph of $\Tadpole_{c,n-c-1}$ for all values of $i$ mentioned. However, we know that $\FS(\Spider(a,b,1,1),\overline{\Tadpole_{c,n-c-1}})$ is connected.
    \item Finally, $\Path_{c-1}\cup \Path_{n-c}$ is also a subgraph of $\Cycle_{n-1}$, so we have a similar result as in the second $\overline{\Path_{n-1}}$ case.
\end{itemize}

Because all four of the resultant friends-and-strangers graphs are connected, we can perform the necessary swaps to bring one of the friends (i.e., labels not among those that are adjacent in $\Tadpole_{c,n-c}$ to the label of the new vertex in $X$) next to the new vertex in $X$. Then, we may swap the label $l$ currently occupying the new vertex in $X$ with the friend labeled $l'$ in $X$. Now consider the remaining friends-and-strangers graph $\FS(X \setminus \{n\},\overline{\Tadpole_{c,n-c}}\setminus \{l'\})$, which is connected regardless of the label $l'$ that is removed. Then, we can repeat again by performing the necessary swaps to bring friend of $l'$, say $l''$, to the vertex adjacent to the $n$ in $X$. 

Repeating this process for $l''', l'''',$ and so forth will allow us to rotate through all possible permutations of $[n+1]$. Notice that because $n\geq 4$, we can always perform the swaps in a way that avoids returning the same label back to the new vertex in $X$. Thus $\FS(X,Y)$ is connected.
\end{proof}

\begin{proof}[Proof of Theorem \ref{thm:stemmedcycle}]
We induct on both $c$ and $n-c$, the lengths of the cycle and path in $\overline{Y}$, respectively. Our base cases tell us that all possible graphs $\FS(\Spider(2,2,1,1),\overline{\Tadpole_{c,7-c}})$ for $3\leq c\leq 7$ are connected. 
With these base cases (as well as the fact that $\FS(X, \overline{\Cycle_{n}})$ is connected for all $4$-legged spiders $X$ with $n\geq6$ vertices by Theorem \ref{thm:group}), we can repeatedly use Lemma \ref{lemma5.3} to obtain the desired result.
\end{proof}

\section{Generalizations to Spiders}\label{general}



Notice that $\FS(\Spider(\lambda_1,\lambda_2,\ldots,\lambda_k),\overline{\Spider(\lambda_1,\lambda_2,\ldots,\lambda_k)})$ is not connected, as the identity permutation $\text{id}\colon V(\Spider(\lambda_1,\lambda_2,\ldots,\lambda_k))\to V(\overline{\Spider(\lambda_1,\lambda_2,\ldots,\lambda_k)})$ is not adjacent to any other vertices in the friends-and-strangers graph. Thus, if $X$ is an $n$-vertex spider such that $\FS(X,Y)$ is connected for every $n$-vertex graph $Y$ that is the complement of a spider with at most $k$ legs, then $X$ must have at least $k+1$. In Theorem~\ref{maintheorem}, we show that for each $k\geq 3$, there exists such a spider $X$ with exactly $k+1$ legs. Additionally, the following claim shows that we cannot take $X$ to be the $(k+1)$-legged spider $\Spider(2,1,\ldots,1)$.

\begin{claim}\label{claim:necessary}
Let $X=\Spider(\lambda_1,\lambda_2,\ldots,\lambda_{k+1})$ and $Y$ be a graph such that $\overline{Y}$ has maximum degree $\lambda_2+\lambda_3+\cdots+\lambda_{k+1}$. Then $\FS(X,Y)$ is not connected.
\end{claim}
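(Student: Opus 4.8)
The plan is to single out one especially constrained label and show it can never be routed freely through the spider. Since $\overline{Y}$ has maximum degree $D:=\lambda_2+\cdots+\lambda_{k+1}$, fix a label $v^{*}$ attaining this degree. Then $v^{*}$ is non-adjacent in $Y$ to exactly $D$ labels and adjacent in $Y$ to exactly $n-1-D=\lambda_1$ labels; call the former set $T$ (the \emph{non-friends}) and the latter $F$ (the \emph{friends}), so $\abs{T}=D$ and $\abs{F}=\lambda_1$. The defining feature is that the token $v^{*}$ can be moved only by swapping it with a label of $F$: no swap along any edge of $X$ can ever exchange $v^{*}$ with a non-friend. Note also that $\abs{F}=\lambda_1$ is precisely the number of vertices on the longest leg, while $\abs{T}=D$ is precisely the number of vertices lying off the center and off the longest leg.

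First I would exploit that $X$ is a tree: the vertex currently holding $v^{*}$ is a cut vertex, and no non-friend token can ever cross it, since traversing that vertex would force a forbidden swap with $v^{*}$. Thus $v^{*}$ acts as a movable barrier separating the non-friends. I would then construct an explicit starting configuration $\sigma_0$ placing $v^{*}$ at the center, the $\abs{T}=D$ non-friends on the $D$ vertices of the short legs, and the $\abs{F}=\lambda_1$ friends on the long leg, and isolate a combinatorial quantity -- the cyclic arrangement of the non-friends $T$ around the barrier token $v^{*}$, read off from a fixed planar drawing of $X$ -- that I claim is preserved by every legal swap. Exhibiting a second configuration $\sigma_1$ consistent with the degree hypothesis but realizing a different value of this quantity then shows that $\sigma_0$ and $\sigma_1$ lie in distinct components.

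The hard part will be verifying that this arrangement really is invariant. The difficulty is that $v^{*}$ is not stationary: its $\lambda_1$ friends can escort it along the legs and through the center, and friends and non-friends may themselves swap with one another (when $Y$-adjacent), so one must rule out every indirect way of transporting a non-friend into a forbidden leg while the barrier drifts. The clean prototype that anchors the argument is the case $\lambda_1=1$, where $X$ is a star and $v^{*}$ has a single friend: there $v^{*}$ can only shuttle back and forth with that one friend, so it is trapped on two vertices and can never reach the remaining legs, which already forces disconnection in the spirit of Wilson's analysis of $\FS(\Star_n,Y)$ in \cite{wilson}. The general case replaces this outright confinement by the weaker rotational invariant above, and I expect the bookkeeping that tracks the barrier as it moves to be the main technical obstacle. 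Once the invariant is established, taking any two configurations on opposite sides of it completes the proof that $\FS(X,Y)$ is disconnected.
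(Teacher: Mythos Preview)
Your opening observation is the right one---a label $v^*$ of $Y$-degree exactly $\lambda_1$ can swap only with its $\lambda_1$ friends---but the invariant you propose to build from it does not hold. Take $X=\Spider(2,1,1,1)$ with center $c$, long leg $a_1,a_2$, and short feet $b_1,b_2,b_3$, and let $\overline{Y}$ be the star with center $v^*$ and leaves $t_1,t_2,t_3$ (so $f_1,f_2$ are isolated in $\overline{Y}$, hence universal among the non-$v^*$ labels in $Y$). Starting from $v^*$ at $c$, $t_i$ at $b_i$, and $f_j$ at $a_j$, the five legal swaps along $\{c,a_1\},\{c,b_1\},\{c,b_2\},\{c,b_1\},\{c,a_1\}$ return $v^*$ to $c$ with $t_1$ and $t_2$ transposed and everything else unchanged: the cyclic order of the non-friends about the center has moved from $(t_1,t_2,t_3)$ to $(t_2,t_1,t_3)$. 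More generally, as soon as $v^*$ steps off the center, friends and non-friends can rotate through the vacated hub at will, so no rotational datum read off at the center can survive; and when $v^*$ is out on a leg there is no cyclic order ``around'' it to speak of, so the quantity is not even well defined along the way.

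The paper's proof is a one-line application of Theorem~\ref{thm:disconnected}: take the path $x_1\cdots x_d$ from the center along the longest leg to the vertex adjacent to its foot, so that $d=\lambda_1$, and observe that the hypothesis on $\overline{Y}$ forces the minimum degree of $Y$ to be at most $(n-1)-(\lambda_2+\cdots+\lambda_{k+1})=\lambda_1=d$. Your $\lambda_1=1$ prototype is exactly the right intuition, but its correct generalization is positional rather than rotational: the relevant bottleneck is this cut-path of $\lambda_1$ vertices, and the disconnection comes from $v^*$ having only $\lambda_1$ friends available to escort it through. Tracking which side of that cut-path certain labels lie on---not their circular order about the center---is the invariant that actually works, and it is precisely what Theorem~\ref{thm:disconnected} packages.
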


The following theorem trivializes the proof of the claim. A \emph{cut vertex} is a vertex that if removed, leaves behind a disconnected graph.

\begin{theorem}\cite{main}
Let $X$ and $Y$ be graphs on $n$ vertices. Supposed $x_1\cdots x_d$  ($d\geq1$) is a path in $X$, where $x_1$ and $x_d$ are cut vertices and each of $x_2,\ldots,x_{d-1}$ has degree exactly $2$. If the minimum degree of $Y$ is at most $d$, then $\FS(X,Y)$ is disconnected.
\label{thm:disconnected}
\end{theorem}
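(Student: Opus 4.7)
The plan is to apply Theorem \ref{thm:disconnected} with a witnessing path running along the longest leg of $X$. Label the vertices of that leg by $c = v_0, v_1, \ldots, v_{\lambda_1}$, where $c$ is the center and $v_{\lambda_1}$ is the foot, and take the path $x_1 x_2 \cdots x_d$ to be $v_0 v_1 \cdots v_{\lambda_1 - 1}$, so that $d = \lambda_1$. (When $\lambda_1 = 1$ this degenerates to the singleton path on $c$, which is still admissible since Theorem \ref{thm:disconnected} allows $d \geq 1$.) The endpoint $x_1 = c$ is a cut vertex because removing $c$ separates the $k+1 \geq 2$ legs of the spider from one another, and $x_d = v_{\lambda_1 - 1}$ is a cut vertex because removing it isolates the foot $v_{\lambda_1}$. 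Each internal vertex $x_i$ with $2 \leq i \leq d-1$ is an interior point of the leg and thus has degree exactly $2$ in $X$.

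The second step is to convert the hypothesis on $\overline{Y}$ into the minimum degree condition needed by Theorem \ref{thm:disconnected}. Pick a vertex $v$ of $\overline{Y}$ that attains the maximum degree $\lambda_2 + \cdots + \lambda_{k+1}$; using $n = 1 + \lambda_1 + \lambda_2 + \cdots + \lambda_{k+1}$ together with $\deg_Y(v) + \deg_{\overline{Y}}(v) = n - 1$, this vertex satisfies $\deg_Y(v) = \lambda_1$. Hence the minimum degree of $Y$ is at most $\lambda_1 = d$, so Theorem \ref{thm:disconnected} immediately yields that $\FS(X, Y)$ is disconnected.

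There is no substantive obstacle to this argument: once the right path has been exhibited in $X$, the claim falls out of Theorem \ref{thm:disconnected} via a one-line degree count. The only case worth a second glance during the write-up is $\lambda_1 = 1$, where $X$ is a star and the witnessing path collapses to the single cut vertex $c$; but there the hypothesis reduces to some vertex of $Y$ having degree at most $1$, which is exactly what Theorem \ref{thm:disconnected} requires.
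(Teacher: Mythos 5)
There is a basic mismatch here: the statement you were asked to prove is Theorem \ref{thm:disconnected} itself, but your argument begins by \emph{applying} Theorem \ref{thm:disconnected}. As a proof of that theorem the write-up is circular and establishes nothing. In this paper the theorem carries no proof at all --- it is imported from Defant and Kravitz \cite{main} --- and a genuine proof would have to produce two permutations in $\FS(X,Y)$ that no sequence of legal swaps connects, for instance via an invariant showing that a label of degree at most $d$ in $Y$ can never be transported across the path $x_1\cdots x_d$. No such construction appears in your proposal, so the assigned statement remains unproved.

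What you have actually written is a proof of Claim \ref{claim:necessary}, the application of Theorem \ref{thm:disconnected} that immediately follows it in the paper, and read as such it is correct and essentially identical to the paper's own argument: you choose the same witnessing path (the center together with the longest leg minus its foot, so that $d=\lambda_1$), check the same cut-vertex and degree-$2$ conditions, and run the same complementary degree count $\deg_Y(v)=(n-1)-(\lambda_2+\cdots+\lambda_{k+1})=\lambda_1$ to bound the minimum degree of $Y$ by $d$. Your explicit handling of the degenerate case $\lambda_1=1$, where the path collapses to the center of a star, is slightly more careful than the paper's. But since that entire argument rests on Theorem \ref{thm:disconnected} as a black box, it cannot serve as a proof of Theorem \ref{thm:disconnected}; you would need to either reproduce the argument from \cite{main} or construct the disconnecting invariant directly.
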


\begin{proof}[Proof of Claim \ref{claim:necessary}] We take the path $x_1\ldots x_d$ to be the longest spider leg excluding the foot, together with the center vertex so that $d=(\lambda_1-1)+1=\lambda_1$. Let the center vertex be $x_1$ and the vertex adjacent to the foot of the longest leg be $x_d$. It is not hard to see that the conditions in Theorem~\ref{thm:disconnected} are satisfied, and that
\begin{align*}
    \text{(minimum degree of $Y$)}&\leq\text{((maximum possible degree of $Y$)}-1) - \text{(maximum degree of $\overline{Y}$)}\\
    &=(\lambda_1+\lambda_2+\cdots+\lambda_{k+1}+1-1)-(\lambda_2+\lambda_3+\cdots+\lambda_{k+1})\\
    &=\lambda_1 \leq \lambda_1,
\end{align*}
so $\FS(X,Y)$ is indeed disconnected.
\end{proof}

\begin{theorem}\label{maintheorem}
Let $X=\Spider(2,2,1\ldots,1)$ be a spider on $k+1$ legs with $n=k+4$ vertices. The following three graphs are connected:
\begin{itemize}
    \item $\FS(X,\overline{\Spider(2,2,2,1\ldots,1)})$
    \item $\FS(X,\overline{\Spider(3,2,1,\ldots,1)})$
    \item $\FS(X,\overline{\Spider(4,1,\ldots,1)})$.
\end{itemize}
Here, there are $k$ legs in each of the complement spider graphs.
\end{theorem}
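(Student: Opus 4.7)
The plan is to prove Theorem \ref{maintheorem} by induction on $k$, establishing all three cases in parallel. The base case $k=3$ involves $X = \Spider(2,2,1,1)$ on $7$ vertices paired with each of $\overline{\Spider(2,2,2)}$, $\overline{\Spider(3,2,1)}$, and $\overline{\Spider(4,1,1)}$; the connectedness of these three finite friends-and-strangers graphs can be checked by direct computation, in the same spirit as the base case of Theorem \ref{thm:stemmedcycle}.

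For the inductive step, suppose the three statements hold for $k-1$ (so $k \geq 4$), fix $X = X_k = \Spider(2,2,1,\ldots,1)$ with $k+1$ legs, and let $\overline{Y} = \overline{Y_k}$ be one of the three $k$-legged spider types. Following the strategy of Lemma \ref{lemma5.3}, I would distinguish a pendant leaf $v$ of $X_k$ attached to the center $c$ so that $X_k - v \cong X_{k-1}$, and run a rotation argument that swaps labels on and off $v$ through the edge $\{v, c\}$, using the sub-friends-and-strangers graph $\FS(X_{k-1}, \overline{Y_k} \setminus \{\ell\})$ (with $\ell$ the current occupant of $v$) to rearrange the remaining labels.

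The core step is to establish connectedness of $\FS(X_{k-1}, \overline{Y_k} \setminus \{\ell\})$ for each label $\ell$, analyzed by the role of $\ell$ in the spider $\overline{Y_k}$:
\begin{itemize}
    \item If $\ell$ is a pendant foot of $\overline{Y_k}$, the reduced graph equals $\overline{Y_{k-1}}$ of the same type, and the inductive hypothesis applies.
    \item If $\ell$ is the center of $\overline{Y_k}$, the reduction is a disjoint union of short paths and isolated vertices; its complement admits $\overline{\Cycle_{n-1}}$ as a spanning subgraph, so Corollary \ref{prop3.5} yields connectedness since $X_{k-1}$ has a center of degree $k \geq 4$.
    \item If $\ell$ sits on a length-$\geq 2$ leg, the analysis splits into subcases depending on the spider type and on whether $\ell$ is interior or a foot; in each case I would match the reduced graph to a previously handled structure (another spider complement, a tadpole complement via Theorem \ref{thm:stemmedcycle}, or $\overline{\Cycle_{n-1}}$).
\end{itemize}

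The main obstacle lies in the third bullet, in the subcase where $\overline{Y_k} \setminus \{\ell\}$ ends up isomorphic to $\overline{X_{k-1}}$ itself---for instance, in the first case with $\overline{Y_k} = \overline{\Spider(2,2,2,1,\ldots,1)}$, removing the foot of a length-$2$ leg produces $\Spider(2,2,1,\ldots,1) \cong X_{k-1}$. There $\FS(X_{k-1}, \overline{Y_k} \setminus \{\ell\}) \cong \FS(X_{k-1}, \overline{X_{k-1}})$ is literally disconnected by the observation preceding Claim \ref{claim:necessary} (the identity permutation is isolated). The hard part will be to rescue the rotation argument in this situation: one shows that the bad permutation is always escapable in a single swap along $\{v, c\}$ (the label at $c$ is forced to be the spider center of $\overline{Y_k}$, which is a non-neighbor of $\ell$ there), landing in a configuration that falls under bullet 1 or 2 above. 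Threading these two-step escapes into a single coherent rotation, and verifying that no target permutation is orphaned, constitutes the most delicate part of the argument.
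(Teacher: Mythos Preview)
Your approach is genuinely different from the paper's, and the obstacle you flag is real---the paper sidesteps it rather than repairs it.

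The paper does \emph{not} induct on $k$. It fixes $k$ and tracks a single distinguished label, the center $n$ of the spider $\overline{Y}$. Once $n$ is parked on a leaf of $X$ (an $A$- or $C$-type vertex in the paper's terminology), the residual graph $\overline{Y}\setminus\{n\}$ has only three edges and is therefore a subgraph of $\Cycle_{n-1}$; the remaining labels can then be permuted freely by Corollary~\ref{prop3.5}, since $X$ minus a leaf still has a center of degree at least $k$. Two further short claims show that $n$ can be maneuvered to any leaf of $X$. There is no inductive hypothesis, no case split over an arbitrary deleted label $\ell$, and in particular the pathological situation $\overline{Y}\setminus\{\ell\}\cong X_{k-1}$ never arises, because the only label ever ``hidden'' is $n$.

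Your rotation scheme, by contrast, must cope with every $\ell$, and more cases break than the one you isolate. First, when $\overline{Y_k}\setminus\{\ell\}\cong X_{k-1}$ the graph $\FS(X_{k-1},\overline{X_{k-1}})$ has not one but $\lvert\mathrm{Aut}(X_{k-1})\rvert=2(k-2)!$ isolated vertices, and you have no control over its remaining components either, so a single escape swap does not glue everything. Second, your appeal to Theorem~\ref{thm:stemmedcycle} is misplaced: deleting a vertex from a spider never produces a tadpole, and that theorem is in any case restricted to the four-legged $\Spider(a,b,1,1)$, not to $X_{k-1}$ for $k\geq 5$. Third, when $\ell$ is an interior vertex of a long leg---for instance the distance-$2$ vertex on the length-$4$ leg of $\Spider(4,1,\ldots,1)$---the residue $\overline{Y_k}\setminus\{\ell\}$ is $\Star_{k+1}\sqcup K_2$, which has a vertex of degree $k$ and embeds neither in a $(k-1)$-legged spider nor in $\Cycle_{n-1}$, so none of your bullets cover it. The paper's device of privileging the center label $n$ is exactly what collapses the residual $\overline{Y}$ to something sparse enough for Corollary~\ref{prop3.5} to finish in one stroke.
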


\begin{proof} For sake of clarity, we assign the following names to the various vertices in $X=\Spider(2,2,1\ldots,1)$: 
\begin{itemize}
    \item the \textit{center} vertex is the vertex of degree $k+1$,
    \item any foot that is adjacent to the center is an \textit{A-type} vertex,
    \item any vertex of degree 2 is a \textit{B-type} vertex, and
    \item any foot that is not adjacent to the center is a \textit{C-type} vertex.
\end{itemize}
An example graph is displayed in Figure \ref{fig:typevertex} with appropriate color-coded vertex types.
\color{black}
\begin{figure}[h]\label{fig:typevertex}
\begin{tikzpicture}[line width=1pt, x=0.4cm,y=0.3cm]
\tikzstyle{dot}=[circle,thin,draw,fill=black,inner sep=2pt]
\pgfdeclarelayer{nodelayer}
\pgfdeclarelayer{edgelayer}
\pgfsetlayers{edgelayer,nodelayer}
	\begin{pgfonlayer}{nodelayer}
		\node [style=dot,fill=brown] (0) at (0, 0) {};
		\node [style=dot,fill=red] (1) at (0.75, 3.5) {};
		\node [style=dot,fill=blue] (2) at (0, 6) {};
		\node [style=dot,fill=red] (3) at (3, 1.5) {};
		\node [style=dot,fill=blue] (4) at (6, 2.25) {};
		\node [style=dot,fill=teal] (5) at (3, -1.75) {};
		\node [style=dot,fill=teal] (6) at (0.5, -3.25) {};
		\node [style=dot,fill=teal] (7) at (-2.5, -2.5) {};
		\node [style=dot,fill=teal] (8) at (-3.5, 0) {};
		\node [style=dot,fill=teal] (9) at (-2.25, 3) {};
		\node [style=dot,fill=blue] (10) at (0, 6) {};
	\end{pgfonlayer}
	\begin{pgfonlayer}{edgelayer}
		\draw (10) to (1);
		\draw (1) to (0);
		\draw (0) to (3);
		\draw (3) to (4);
		\draw (0) to (5);
		\draw (0) to (6);
		\draw (0) to (7);
		\draw (8) to (0);
		\draw (0) to (9);
	\end{pgfonlayer}
\end{tikzpicture}
\caption{The graph $\Spider(2,2,1,1,1,1)$. The center vertex is \textcolor{brown}{brown}, the $A$-type vertices are \textcolor{teal}{teal}, the $B$-type vertices are \textcolor{red}{red}, and the $C$-type vertices are \textcolor{blue}{blue}.}
\color{black}
\end{figure}

Additionally, let the three complement spider graphs generally be referred to as $Y$. We focus on the center vertex of the spider $\overline{Y}$, which corresponds to the label $n$. Theoretically we can think of ``hiding" this vertex in the graph $X$ so that many of the edges in $\overline{Y}$ are effectively canceled, thus making swaps between the other $n-1$ labels much easier.
\color{black}
To achieve this, we prove the following three claims. Combining the latter two claims (and noting that all series of swaps are reversible) shows that the label $n$ can occupy any vertex in $X$, and combining this fact with the first claim proves the desired result by showing all $n!$ permutations are reachable.

\begin{claim}\label{claim1}
If the label $n$ is located at an $A$ or $C$-type vertex, the remaining $n-1$ labels can be freely swapped.
\end{claim} 

\begin{claim}\label{claim2}
It is always possible to move the label $n$ if it is located at the center or a $B$-type vertex to either an $A$ or $C$-type vertex through a series of swaps.
\end{claim}

 \begin{claim}\label{claim3}
 If the label $n$ currently occupies an $A$ or $C$-type vertex, a series of swaps can move it to any other $A$ or $C$-type vertex.
 \end{claim}

\begin{proof}[Proof of Claim \ref{claim1}] For either type of vertex, there remain exactly $3$ edges in $\overline{Y}$ after the label $n$ is removed, and in all three graphs the edges are subgraphs of $\Path_{6}$, which is itself a subgraph of $\Cycle_{n-1}$. Thus, showing all $(n-1)!$ permutations can be reached is equivalent to showing $\FS(\Spider(2,2,\triangle),\overline{\Cycle_{n-1}})$ is connected for the $A$-type vertex and $\FS(\Spider(2,1,\triangle),\overline{\Cycle_{n-1}})$ is connected for the $C$-type vertex, where the $\triangle$ represents $k-2$ 1's in both graphs. However, by Proposition \ref{prop3.5} both graphs are connected.
\end{proof}

\begin{proof}[Proof of Claim \ref{claim2}]
We begin with the case of the $B$-type vertex. If the label $n$ is not adjacent in $\overline{Y}$ to the label $l$ currently occupying the adjacent $C$-type vertex, a swap between these two labels would complete the proof. Thus, we assume otherwise: that the labels $n$ and $l$ are adjacent in $\overline{Y}$. Consider the graph $X'=\Spider(2,1,\triangle)$ obtained from $X$ by deleting the entire leg containing the label $n$. Additionally, consider the graph $Y'=Y\setminus\{l,n\}$. Note that the union of edges in $\overline{Y'}$ is a subgraph of $\Cycle_{n-2}$, so to show that the remaining $n-2$ labels in $Y'$ can be free swapped around we show $\FS(X',Y')$ is connected. Once again by Proposition \ref{prop3.5} this is true.

Thus, we can swap the labels so that the center vertex and an $A$-type vertex are both occupied by labels not adjacent to $n$ in $\overline{Y}$. From here, swap the label $n$ first with the center vertex label, then with the $A$-type vertex label so that the label $n$ now occupies an $A$-type vertex.

For the case of when the label $n$ is in the center, note that it is adjacent to exactly $n$ different labels in $\overline{Y}$, so at least one of the $n-3$ $A$ or $B$-type vertices adjacent to the center vertex will have a label that can swap with it. Once this occurs, the label $n$ occupies either an $A$ or $B$-type vertex, and for the latter we just repeat the procedure for $B$-type vertices to complete the proof.
\end{proof}

 \begin{proof}[Proof of Claim \ref{claim3}]
 We show that the label $n$ can be moved from any $A$-type vertex to any other $A$-type vertex. We also show that it can be moved from any $C$-type vertex to any $A$-type vertex. Reversing the steps of the second method shows that moving from a $A$ to any $C$-type vertex as well as going from one $C$-type vertex to the other, is possible.
 
 For the $A$ to $A$-type vertex swap, we use the fact that we can rearrange the remaining $n-1$ labels in any way we want at the start. Thus, we arrange them so that the center vertex as well as the new $A$-type vertex contain two of the three labels that are not adjacent to the label $n$ in $\overline{Y}$. Swapping the label $n$ with the center vertex label and then the new $A$-type vertex label completes the procedure. 
 
 For the $C$ to $A$-type vertex swap, we once again use the rearranging method, this time to place all three labels not adjacent to the label $n$ in $\overline{Y}$ at (1) the $B$-type vertex adjacent to the current $C$-type vertex, (2) the center vertex, and (3) the desired $A$-type vertex. Swapping the label $n$ with the labels occupying these three vertices in order completes the procedure.
 \end{proof}
 
 With these three claims, the proof is complete.
 \end{proof}
 
\color{black}
\begin{theorem}\label{thm:spiderinduct}
Let $X$ be any connected graph on $n$ vertices that contains the $(k+1)$-legged spider $\Spider(2,2,1,\ldots,1)$ as a subgraph. Let $Y$ be a graph so that $\overline{Y}$ is a spider with at most $k$ legs. Then $\FS(X,Y)$ is always connected. 
\end{theorem}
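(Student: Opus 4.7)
The plan is to induct on $n=|V(X)|$, with the base case $n=k+4$. For the base case, $X$ is a connected graph on exactly $k+4$ vertices containing the $(k+4)$-vertex graph $\Spider(2,2,1,\ldots,1)$ as a subgraph, so $X$ contains it as a \emph{spanning} subgraph and $\FS(X,Y)\supseteq\FS(\Spider(2,2,1,\ldots,1),Y)$; it therefore suffices to prove the latter is connected. When $\overline{Y}$ has exactly $k$ legs, the leg-length partitions of $k+3$ into $k$ positive parts are precisely $(4,1,\ldots,1)$, $(3,2,1,\ldots,1)$, and $(2,2,2,1,\ldots,1)$, all three of which are handled by Theorem~\ref{maintheorem}. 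When $\overline{Y}$ has strictly fewer than $k$ legs, the three claims in the proof of Theorem~\ref{maintheorem} still apply, since they depend only on: (i) removing the label at the center of $\overline{Y}$ leaving a disjoint union of paths, hence a subgraph of $\Cycle_{n-1}$; (ii) the center having at most $k$ neighbors in $\overline{Y}$; and (iii) the existence of at least three labels not adjacent to the center. All three persist a fortiori when the spider has fewer legs.

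For the inductive step with $n>k+4$, I first locate a removable vertex of $X$ outside the embedded spider. Fix an embedded copy of $\Spider(2,2,1,\ldots,1)$ in $X$ and extend it to a spanning tree $T$ of $X$. Contracting the embedded spider inside $T$ to a single vertex produces a tree on $n-(k+3)\geq 2$ vertices, which has at least one leaf outside the contracted vertex; its preimage $v$ is a leaf of $T$ lying outside the embedded spider, so $X'=X\setminus\{v\}$ is connected and still contains $\Spider(2,2,1,\ldots,1)$. Mirroring the argument of Lemma~\ref{lemma5.3}, I show that with label $l$ fixed at $v$, the remaining $n-1$ labels freely permute on $X'$, by analyzing $\overline{Y\setminus l}$ based on $l$'s role in $\overline{Y}$. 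If $l$ is the center, $\overline{Y\setminus l}$ is a union of disjoint paths, hence a subgraph of $\Cycle_{n-1}$, and $\FS(X',\overline{\Cycle_{n-1}})$ is connected by Corollary~\ref{prop3.5} since $X'$ still contains a vertex of degree $\geq k+1\geq 4$. If $l$ is a foot of $\overline{Y}$, then $\overline{Y\setminus l}$ is itself a spider with at most $k$ legs on $n-1$ vertices, and the inductive hypothesis applies directly. If $l$ is an interior vertex of a leg, $\overline{Y\setminus l}$ is a spider together with a disjoint path, which embeds as a subgraph into a spider with at most $k$ legs on $[n]\setminus\{l\}$ by reattaching the stranded path as an extension of the shortened leg, so the inductive hypothesis together with supergraph containment in the $Y$-direction yields connectedness of $\FS(X', Y\setminus l)$.

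Once free permutation of the other $n-1$ labels is established, I cycle the label at $v$ as in Lemma~\ref{lemma5.3}: rearrange the other labels to bring a $Y$-friend of the current label at $v$ to an $X$-neighbor of $v$, swap, and repeat. Since $\overline{Y}$ is a spider with max degree $\leq k<n-1$, the graph $Y$ is connected, so every label can eventually be placed at $v$; combined with the free permutation of the other labels, this shows all $n!$ configurations lie in one component of $\FS(X,Y)$. The main obstacle is the interior-leg case of the case analysis, where one must verify explicitly that the spider-plus-disjoint-path structure embeds into some spider with at most $k$ legs on $[n]\setminus\{l\}$; reattaching the stranded path as an extension of the shortened leg should suffice, but care is needed to ensure the resulting spider has at most $k$ legs and that the vertex labels align correctly. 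The remaining portions of the argument are direct adaptations of Lemma~\ref{lemma5.3}.
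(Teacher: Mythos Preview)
Your proposal is correct and follows essentially the same inductive approach as the paper: base case via Theorem~\ref{maintheorem}, then for $n>k+4$ remove a vertex outside the embedded spider and show $\FS(X',Y\setminus\{l\})$ is connected for every label $l$ by observing that $\overline{Y}\setminus\{l\}$ is always a subgraph of a spider with at most $k$ legs on $[n]\setminus\{l\}$. Your write-up is in fact more careful than the paper's---you explicitly produce the removable vertex via a spanning-tree leaf, address the base case when $\overline{Y}$ has strictly fewer than $k$ legs, and spell out the case analysis on the role of $l$ (center, foot, interior)---but the underlying strategy is identical.
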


\begin{proof}
We proceed by induction on the number of vertices, the base case for different values of $k$ given by Theorem \ref{maintheorem}. Assume that for any graphs $X'$ and $Y'$ with vertex set $[n-1]$ that satisfy the constraints in the theorem statement, $\FS(X',Y')$ is connected. We can add a single edge connecting any existing foot in $X'$ to a new vertex, simultaneously adding a new edge connecting an existing foot in $\overline{Y'}$ to a new label. Denote the new graphs formed as $X$ and $Y$, respectively.

We show that regardless of the label $l$ that occupies the newly added vertex in $X$, the graph $\FS(X',\overline{Y}\setminus\{l\})$ is still connected. The graph $X'$ obviously satisfies Theorem \ref{thm:spiderinduct}, and $\overline{Y}\setminus \{l\}$ is always a subgraph of a spider with \emph{at most} $n$ legs. Thus, $\FS(X',\overline{Y}\setminus\{l\})$ is always connected.

Since this implies that 
\begin{itemize}
    \item it is always possible to perform swaps so that all permutations of the remaining labels on $X'$ can be reached, and
    \item repeated swaps between different labels occupying the new vertex in $X$ and the adjacent foot in $X'$ will guarantee that every label could be swapped into the new vertex, 
\end{itemize}
this is sufficient to prove that $\FS(X,Y)$ is connected. Since our choices of $X$ and $Y$ were arbitrary, this implies any graphs $X$ and $Y$ with $n$ vertices that satisfy the conditions of Theorem \ref{thm:spiderinduct} yield a connected $\FS(X,Y)$, completing the inductive step.

\end{proof}

\begin{ex}
The friends-and-strangers graph $\FS(\Spider(10,5,4,3,1,1),Y)$ is connected for any graph $Y$ such that $\overline{Y}$ is a spider with at most $5$ legs.
\end{ex}

Considering the overlap between sections \ref{fruit} and \ref{general}, we obtain a corollary of Theorems \ref{thm:stemmedcycle} and \ref{thm:spiderinduct}. Notice that $\Spider(a,b,c)$ is a subgraph of $\Tadpole_{a+b+1,c}$, which allows us to extend the implications of Theorem \ref{thm:stemmedcycle} to $3$-legged spiders as follows.

\begin{corollary}\label{cor:threelegs}
Let $X$ be a graph on $n$ vertices that contains $\Spider(2,2,1,1)$ as a subgraph. Let $Y=\overline{\Spider(a,b,c)}$ so that $a+b+c+1=n$. Then $\FS(X,Y)$ is connected.
\end{corollary}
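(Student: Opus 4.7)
The plan is to reduce the statement to a previously established tadpole result via a clean subgraph argument. First I would verify the inclusion $\Spider(a,b,c) \subseteq \Tadpole_{a+b+1,c}$: viewing the tadpole as a cycle of length $a+b+1$ through the triple point with a pendant path of length $c$, the cycle edge at distance $a+1$ from the triple point can be deleted to split the cycle into two arcs of lengths $a$ and $b$, producing $\Spider(a,b,c)$. Complementing reverses the containment, so $\overline{\Tadpole_{a+b+1,c}} \subseteq \overline{\Spider(a,b,c)} = Y$. Because a larger target graph only adds edges to the friends-and-strangers graph, $\FS(X, \overline{\Tadpole_{a+b+1,c}})$ is a spanning subgraph of $\FS(X, Y)$; hence it suffices to prove connectedness of the former.

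For $X = \Spider(2,2,1,1)$ this is exactly Theorem \ref{thm:stemmedcycle}, with the hypothesis $a+b+1 \geq 3$ automatic. To extend to an arbitrary $X$ containing $\Spider(2,2,1,1)$ as a subgraph, I would invoke Theorem \ref{thm:spiderinduct} in its $k=3$ form: since $\overline{Y} = \Spider(a,b,c)$ is a spider with at most three legs and $X$ contains the four-legged spider $\Spider(2,2,1,1)$ (i.e. $\Spider(2,2,1,\ldots,1)$ with $k+1 = 4$ legs), that theorem applies and gives $\FS(X, Y)$ connected directly. This makes the corollary essentially a one-line consequence once the subgraph inclusion is noted.

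The main obstacle, if one instead pursues the tadpole route in its own right rather than applying Theorem \ref{thm:spiderinduct} off the shelf, is to re-run the pendant-stripping induction with $\overline{Y}$ now a tadpole complement. This requires checking that deleting any label from $\overline{\Tadpole_{a+b+1,c}}$ leaves a graph whose complement still contains either $\Tadpole_{c,n-c-1}$ or $\Tadpole_{c-1,n-c}$ as a subgraph, so that the inductive hypothesis applies. That case analysis is exactly what Lemma \ref{lemma5.3} already carries out, so no fresh difficulty arises. In short, the corollary is a short paragraph in either direction, and the tadpole inclusion $\Spider(a,b,c) \subseteq \Tadpole_{a+b+1,c}$ is the single observation that unifies Theorems \ref{thm:stemmedcycle} and \ref{thm:spiderinduct} into a common conclusion.
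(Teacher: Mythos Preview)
Your proposal is correct and matches the paper's own treatment: the paper states the corollary as a joint consequence of Theorems~\ref{thm:stemmedcycle} and~\ref{thm:spiderinduct}, and the single observation it highlights is precisely your inclusion $\Spider(a,b,c)\subseteq\Tadpole_{a+b+1,c}$. You are also right to note that Theorem~\ref{thm:spiderinduct} with $k=3$ already yields the corollary outright, so the tadpole route via Theorem~\ref{thm:stemmedcycle} is redundant once Theorem~\ref{thm:spiderinduct} is in hand; the paper mentions both only to exhibit the overlap between the two sections.
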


\section{Concluding Remarks and Future Directions}\label{final}

A more generalized study of the tadpole graphs $\Tadpole_{c,n-c}$ in Section \ref{fruit} brings forth a new type of graph. Let $\lambda_1\geq\lambda_2\geq\cdots\geq\lambda_k$ and $\gamma_1\geq\gamma_2\geq\cdots\geq\gamma_h\geq3$ all be integers. We let the \emph{spycle graph} $\Spycle(\lambda_1,\lambda_2,\ldots,\lambda_k;\gamma_1,\gamma_2,\ldots,\gamma_h)$ be the graph on $n=1+\sum_{i=1}^k \lambda_i + \sum_{i=1}^h \gamma_i$ vertices. The vertex $n$ serves as the center vertex of a spider with $k$ legs of length $\lambda_1,\ldots,\lambda_k$, and the graph also contains $h$ cycles with length $\gamma_1,\ldots,\gamma_h$, all intersecting at only $n$. Notice the spycle graph is also a generalization of the tadpole graph $\Tadpole_{c,n-c}$, which can be expressed as $\Spycle(n-c;c)$. A picture of $\Spycle(2,4,4;3,5)$ is below.

\begin{figure}[H]
\begin{tikzpicture}[line width=1pt, x=0.37cm,y=0.23cm]
\tikzstyle{dot}=[circle,thin,draw,fill=black,inner sep=1.2pt]
\pgfdeclarelayer{nodelayer}
\pgfdeclarelayer{edgelayer}
\pgfsetlayers{edgelayer,nodelayer}
	\begin{pgfonlayer}{nodelayer}
	\node [style=dot] (0) at (0, 0) {};
		\node [style=dot] (1) at (-1, 1) {};
		\node [style=dot] (2) at (-2, 2.75) {};
		\node [style=dot] (3) at (1, 1.25) {};
		\node [style=dot] (4) at (1, 2.75) {};
		\node [style=dot] (5) at (1.75, 4.25) {};
		\node [style=dot] (6) at (0.5, 6) {};
		\node [style=dot] (7) at (1.75, 0.25) {};
		\node [style=dot] (8) at (3.25, 0.75) {};
		\node [style=dot] (9) at (4.75, -0.5) {};
		\node [style=dot] (10) at (6.5, 0.5) {};
		\node [style=dot] (11) at (0.5, -1.75) {};
		\node [style=dot] (12) at (1.5, -1) {};
		\node [style=dot] (13) at (-2.5, -0.25) {};
		\node [style=dot] (14) at (-1, -1.25) {};
		\node [style=dot] (15) at (-3, -2.75) {};
		\node [style=dot] (16) at (-1, -3) {};
		\node [style=dot] (17) at (-3, -2.75) {};
		\node [style=dot] (18) at (-3, -2.75) {};
	\end{pgfonlayer}
	\begin{pgfonlayer}{edgelayer}
		\draw (13) to (18);
		\draw (18) to (16);
		\draw (16) to (14);
		\draw (14) to (0);
		\draw (0) to (1);
		\draw (1) to (2);
		\draw (0) to (3);
		\draw (3) to (4);
		\draw (4) to (5);
		\draw (5) to (6);
		\draw (0) to (7);
		\draw (7) to (8);
		\draw (8) to (9);
		\draw (9) to (10);
		\draw (0) to (12);
		\draw (12) to (11);
		\draw (11) to (0);
		\draw (0) to (13);
	\end{pgfonlayer}
\end{tikzpicture}
\end{figure}

\begin{qn}
What conditions are required for $\overline{Y}$ to satisfy in order for $\FS(\Spycle,Y)$ to be connected? If two legs of a spider $X$ are connected at their feet by an edge to form a spycle $X'$, what graphs $Y$ yield a connected friends-and-strangers graph $\FS(X',Y)$ but a disconnected $\FS(X,Y)$? 
\end{qn}

Similar to the method used to prove Theorem \ref{maintheorem}, an inductive approach is promising when investigating the connectedness of $\FS(\Spycle,\overline{\Spycle})$.

\section*{Acknowledgements}

I extend my gratitude to the MIT PRIMES organizers for arranging such an engaging research opportunity. I would also like to thank my mentor, Colin Defant, for his advice and assistance with the research and writing process. Additionally, I would like to thank Dr. Tanya Khovanova for providing insightful comments during the paper-writing process.


\begin{thebibliography}{9}

\bibitem{typext}
    Noga Alon, Colin Defant, and Noah Kravitz. ``Typical and Extremal Aspects of Friends-and-Strangers Graphs". arXiv:2009.07840 (2020).

\bibitem{bang}
    Kiril Bangachev. “On the Asymmetric Generalizations of Two Extremal Questions on Friends-and-Strangers Graphs". arXiv:2107.06789 (2021).
    
\bibitem{group}
    Colin Defant, David Dong, Alan Lee, and Michelle Wei. ``Connectedness and Cycle Spaces of Friends-and-Strangers Graphs". arXiv:2209.01704 (2022).
    
\bibitem{main}   
    Colin Defant and Noah Kravitz. ``Friends and Strangers Walking on Graphs". To appear in \emph{Combinatorial Theory}, (2021). arXiv:2009.05040 (2021).
    
\bibitem{jeongdiam}
    Ryan Jeong. ``Diameters of Connected Components of Friends-and-Strangers Graphs Are Not Polynomially Bounded". arXiv:2201.00665 (2022).
    
\bibitem{jeongstruct}
    Ryan Jeong. ``On Structural Aspects of Friends-And-Strangers Graphs". arXiv:2203.10337 (2022).
    
\bibitem{randpairs}
    Lanchao Wang and Yaojun Chen. ``Connectivity of friends-and-strangers graphs on random pairs". arXiv:2208.00801 (2022).
    
\bibitem{wilson}
    R. M. Wilson, "Graph puzzles, homotopy, and the alternating group". \emph{J. Combin. Theory, Ser. B}, \textbf{16} (1974), 86-96.

\end{thebibliography}
\end{document}